\documentclass[11pt,reqno]{amsart}
\usepackage{ytableau, tikz,hyperref}
\usepackage[backend=bibtex, style=alphabetic]{biblatex}
\addbibresource{fslr.bib}
\usepackage{palatino}
\usepackage[left=3cm,right=3cm,top=3cm,bottom=3cm]{geometry}
\usepackage[shortlabels,inline]{enumitem}
\newcommand\characx{\mathbf{x}}
\newcommand{\key}{\kappa}
\newtheorem{theorem}{Theorem}
\newtheorem{remark}{Remark}
\newcommand\bremark{\begin{remark}\begin{upshape}}
\newcommand\eremark{\end{upshape}\end{remark}}
\newtheorem{proposition}{Proposition}
\newtheorem{corollary}{Corollary}
\newtheorem{lemma}{Lemma}

\DeclareMathOperator{\Tab}{Tab}
\DeclareMathOperator{\Mat}{Mat}

\allowdisplaybreaks
\linespread{1.2}
\title[ Demazure crystal structure for Flagged reverse plane partitions]{Demazure crystal structure for\\ Flagged reverse plane partitions}

\author[]{Siddheswar Kundu}
\address{The Institute of Mathematical Sciences, A CI of Homi Bhabha National Institute, Chennai 600113, India}
\email{siddheswark@imsc.res.in}
\keywords{reverse plane partition, key polynomials, Demazure crystals} 
\thanks{The author acknowledges partial funding from a DAE Apex Project grant to the Institute of Mathematical Sciences, Chennai.}
\subjclass[]{05E05}
\begin{document}
\begin{abstract}
Given a skew shape $ \lambda / \mu $ and a flag $\Phi,$ we show that the set of all flagged reverse plane partitions of shape $\lambda / \mu$ and flag $\Phi$ is a disjoint union of Demazure crystals (up to isomorphism). As a result, the flagged dual stable Grothendieck polynomial $ g_{\lambda/\mu}(X_\Phi)$ is shown to be key positive.
\end{abstract}
\maketitle
\section{Introduction}
 The polynomial ring $\mathbb{Z}[x_1,x_2,\cdots]$ has a
 $\mathbb{Z}$-basis of key polynomials $\{\kappa_{\alpha}\},$ where $\alpha$ varies over all compositions, see \cite[Corollary 7]{RS}. For a permutation $\sigma$ in the symmetric group $\mathfrak{S}_n$ and a partition $\lambda $ with at most $n$ parts, the key polynomial $\kappa_{\sigma.\lambda}$ ($\sigma $ acts on $\lambda$ by usual left action of $\mathfrak{S}_n$ on $n$-tuples) is the character of the Demazure crystal $ \mathcal{B}_{\sigma} (\lambda)$. A polynomial $f \in \mathbb{Z}[x_1,x_2,\cdots] $ is called \emph{key positive} if it is a sum of key polynomials. For example, Schubert polynomial
is key positive. 

Given a skew shape $\lambda/ \mu $ and a flag $\Phi,$ Reiner-Shimozono \cite[Theorem 20]{RS} has given an expansion of a flagged skew Schur polynomial $s_{\lambda / \mu}(X_{\Phi})$ as a sum of key polynomials. A crystal theoretic statement of this result is namely $\Tab(\lambda/\mu, \Phi)$, i.e., the set of all flagged semi-standard tableaux of shape $\lambda /\mu$
and flag $\Phi$ is a disjoint union of Demazure crystals. In ~\cite[Theorem 4 \& Appendix]{krsv} this theorem is proved. In this paper, we extend this theorem as follows:
\begin{theorem}
  Let $ \lambda, \mu \in \mathcal{P}[n]$ such that $\mu \subset \lambda$ and $ \Phi $ be a flag in $\mathcal{F}[n]$. Then $\Re(\lambda/\mu,\Phi) $ is a disjoint union of Demazure crystals (up to isomorphism). More precisely,
  $$ \Re(\lambda/\mu,\Phi) \cong \displaystyle\bigsqcup_{Q} \mathcal{B}_{\tau} (\widehat{\beta(Q)}^{\dagger}),$$
  where $Q$ varies over all $(\lambda/\mu, \Phi)$-compatible tableaux and $\tau$ is any permutation such that $\tau. \widehat{\beta(Q)}^{\dagger}=\widehat{\beta(Q)}$. See Section $2$ and Section $4$ for the notations.
\label{th 1}
\end{theorem}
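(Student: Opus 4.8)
The goal is to realize $\Re(\lambda/\mu,\Phi)$, the set of flagged reverse plane partitions, as a disjoint union of Demazure crystals. I would approach this by mimicking the known strategy for $\Tab(\lambda/\mu,\Phi)$ in \cite{krsv}, but using the extra combinatorial flexibility that reverse plane partitions allow (repeated entries along columns). The plan is to first equip $\Re(\lambda/\mu,\Phi)$ with a Kashiwara crystal structure via suitable raising and lowering operators $e_i,f_i$ — presumably the operators coming from the Gelfand--Tsetlin / hive model or from a reading word with an appropriate bracketing rule adapted to RPPs (this is where the ``elegant filling''/marking trick of Galashin--Grinberg--Liu or Lam--Pylyavskyy typically enters). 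I would then show this crystal is a \emph{normal} (i.e., $\mathfrak{gl}_n$-seminormal) crystal, so that each connected component is isomorphic to a subcrystal of a highest-weight crystal $B(\nu)$ for some partition $\nu$.

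**The core argument.** The heart of the proof is to show each connected component is in fact a \emph{Demazure} subcrystal, not merely an arbitrary subcrystal. For this I would use the characterization (due to Kashiwara, and in the form used by Assaf--Schilling / \cite{krsv}) that a subset $X\subseteq B(\nu)$ closed under all $f_i$ that are nonzero on it and containing a unique highest-weight element is a Demazure crystal if and only if it satisfies the ``string property'': whenever $x\in X$ and $f_i x\neq 0$, either $f_i x\in X$ already, or the whole $i$-string through $x$ below $x$ lies outside... more precisely, $X$ is a union of $i$-strings of the appropriate truncated shape. So the key steps are: (1) identify, for each component, the unique source element and read off a partition $\widehat{\beta(Q)}^\dagger$ and a permutation $\tau$ from the data of a $(\lambda/\mu,\Phi)$-compatible tableau $Q$ (the parametrization of components by such $Q$ is the bookkeeping step); (2) verify the Demazure string condition componentwise. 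I would prove (2) by an explicit description of when $e_i$ or $f_i$ sends a flagged RPP out of the set: the flag constraints say entry in row $j$ is $\le \Phi_j$, and I would show that the ``boundary'' imposed by $\Phi$ exactly cuts out a Demazure-type truncation of each $i$-string — the same phenomenon as in the tableau case, only the RPP crystal operators need to be checked to respect it.

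**Reduction to a known case / induction.** An alternative, and probably cleaner, route is to find a weight- and crystal-operator-preserving \emph{bijection} between $\Re(\lambda/\mu,\Phi)$ and a disjoint union of sets of the form $\Tab(\nu/\rho,\Psi)$, or to set up an inclusion $\Tab(\lambda/\mu,\Phi)\hookrightarrow\Re(\lambda/\mu,\Phi)$ and analyze the complement via the standard RPP-to-SSYT correspondence (recording which cells are ``strictly'' increasing down columns). Then Theorem~\ref{th 1} would follow from the $\Tab$ case in \cite{krsv} applied to each piece, after checking the bijection intertwines the $e_i,f_i$ and matches up the parameters $\widehat{\beta(Q)}^\dagger$, $\tau$. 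I would likely run an induction on $|\lambda/\mu|$ or on the number of boxes where equality down a column occurs, peeling off one such relation at a time and invoking a ``branching'' lemma that a Demazure crystal remains a disjoint union of Demazure crystals under the relevant restriction.

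**Main obstacle.** I expect the principal difficulty to be the crystal operators themselves: unlike SSYT, flagged RPPs do not carry an obvious, well-behaved $\mathfrak{gl}_n$-crystal structure, because the natural reading word can have repeated letters in a column and the usual signature rule may fail to be reversible or to respect the flag. Pinning down $e_i,f_i$ so that (a) weights are the content, (b) the flag condition is preserved or controlled, and (c) the result is seminormal, is the crux; once that is in place, identifying components as Demazure crystals and extracting $(\tau,\widehat{\beta(Q)}^\dagger)$ from the compatible tableaux $Q$ should follow the \cite{RS}--\cite{krsv} template, with the $\dagger$ and hat operations being the combinatorial reindexing needed because RPP weights sit ``upside down'' relative to the tableau convention. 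A secondary obstacle is proving the union over $Q$ is genuinely \emph{disjoint} — this requires showing distinct compatible tableaux $Q$ land in distinct components, i.e. the source elements are distinguished by $Q$.
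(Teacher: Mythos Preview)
Your proposal is a reasonable strategic outline, but it misses the two concrete mechanisms the paper actually uses, and your ``main obstacle'' rests on a misunderstanding of the setup.

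First, the crystal structure on $\Re(\lambda/\mu,m)$ is not something to be invented here: it is imported wholesale from \cite{Grothendieck}. The reading word $r_T$ is obtained by \emph{deleting} every entry equal to the entry directly below it and then reading left-to-right, bottom-to-top; the weight is the column-count (number of columns containing $i$), not the content. With this convention each fixed-height piece $\Re(\lambda/\mu,m,h)$ embeds as a \emph{full subcrystal} of $\mathbb{W}_m^{\otimes k}$, so normality and reversibility of $e_i,f_i$ are already guaranteed. Your worry that ``repeated letters in a column'' break the signature rule, and your assumption that ``weights are the content'', are both off; once you use the reduced reading word the problem collapses to a question about ordinary words.

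Second, neither of your two routes is the one taken. The paper's main proof (Section~4) does not verify any string/Demazure axiom directly. Instead, for each $(\lambda/\mu,\Phi)$-compatible $Q$ it builds, via the Burge correspondence (Theorem~\ref{burge}) and the left-key manipulation of \cite{krsv}, a Knuth-equivalence-preserving bijection $\Omega:\mathcal{C}(\lambda/\mu,Q,\Phi)\to\mathcal{W}(\beta(Q),\Phi)$; then it invokes Theorem~\ref{ABC} (from \cite{krsv}) that $\mathcal{W}(\beta,\Phi)$ is already known to be a Demazure crystal $\mathcal{B}_\tau(\hat\beta^\dagger)$. The disjointness over $Q$ is immediate from the decomposition by recording tableau, not something argued separately. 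The paper's alternative proof (Section~5) is closer in spirit to your ``reduce to a known case'' paragraph, but it does not biject to sets $\Tab(\nu/\rho,\Psi)$: it stratifies by height $h$, writes $\Re(\lambda/\mu,\Phi,h)=\bigl(\mathbb{B}_{\Phi_{h_1}}^{\otimes t_1}\otimes\cdots\otimes\mathbb{B}_{\Phi_{h_s}}^{\otimes t_s}\bigr)\cap\Re(\lambda/\mu,n,h)$, and then uses that (i) the tensor product is a disjoint union of Demazure crystals by \cite[Lemma~1]{krsv}, and (ii) the second factor is a full subcrystal of $\mathbb{W}_n^{\otimes k}$. Intersecting a union of Demazure crystals with a full subcrystal again yields a union of Demazure crystals, and one is done.

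So the genuine gap is that your plan never arrives at either of these reductions; the string-property verification you propose might be made to work, but it is not attempted, and the RPP-to-SSYT / induction sketch is too vague to connect to the explicit parametrization by $Q$ and $\widehat{\beta(Q)}$ that the statement requires.
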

As an important corollary, it provides an expansion of the flagged dual stable Grothendieck polynomial $ g_{\lambda/\mu}(X_\Phi) $ as a sum of key polynomials as follows: 
\begin{corollary}
 The flagged dual stable Grothendieck polynomial $ g_{\lambda / \mu}(X_{\Phi})$ is key positive. Explicitly, 
 $$ g_{\lambda / \mu}(X_{\Phi}) = \sum_{Q} \key_{\widehat{\beta(Q)}} ,$$ where $Q$ varies over all $(\lambda/\mu, \Phi)$-compatible tableaux.
\end{corollary}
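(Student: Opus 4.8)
The plan is to deduce the corollary from Theorem~\ref{th 1} by passing to characters. Three standard facts are needed: the character (i.e.\ weight generating function) of a disjoint union of crystals is the sum of the characters of the summands; the character is invariant under crystal isomorphism; and, as recalled in the introduction, the character of a Demazure crystal $\mathcal{B}_\sigma(\lambda)$ is the key polynomial $\kappa_{\sigma.\lambda}$. In particular, with $\tau$ chosen so that $\tau.\widehat{\beta(Q)}^{\dagger} = \widehat{\beta(Q)}$, one has $\operatorname{char}\mathcal{B}_{\tau}(\widehat{\beta(Q)}^{\dagger}) = \kappa_{\widehat{\beta(Q)}}$.

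First I would record that, by the definition of the flagged dual stable Grothendieck polynomial given in Section~2, $g_{\lambda/\mu}(X_{\Phi})$ is exactly the generating function $\sum_{T \in \Re(\lambda/\mu,\Phi)} x^{\operatorname{wt}(T)}$ over flagged reverse plane partitions, where $\operatorname{wt}(T)_i$ counts the columns of $T$ containing the entry $i$ (rather than the cells). It then has to be verified that this monomial weight coincides with the weight function of the crystal structure placed on $\Re(\lambda/\mu,\Phi)$ in Section~4 --- this is built into that construction --- so that $g_{\lambda/\mu}(X_{\Phi}) = \operatorname{char}\Re(\lambda/\mu,\Phi)$.

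Then the corollary follows by applying Theorem~\ref{th 1}: since $\Re(\lambda/\mu,\Phi) \cong \bigsqcup_{Q}\mathcal{B}_{\tau}(\widehat{\beta(Q)}^{\dagger})$ as crystals, taking characters, using additivity over disjoint unions, and invoking the Demazure character formula gives
$$ g_{\lambda/\mu}(X_{\Phi}) \;=\; \operatorname{char}\Re(\lambda/\mu,\Phi) \;=\; \sum_{Q}\operatorname{char}\mathcal{B}_{\tau}(\widehat{\beta(Q)}^{\dagger}) \;=\; \sum_{Q}\kappa_{\widehat{\beta(Q)}}, $$
the sum running over all $(\lambda/\mu,\Phi)$-compatible tableaux $Q$. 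Since the right-hand side is a nonnegative integer combination of key polynomials, $g_{\lambda/\mu}(X_{\Phi})$ is key positive.

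The only delicate point --- and hence the main obstacle --- is the bookkeeping identifying the crystal weight with the weight defining $g_{\lambda/\mu}$: because the reverse-plane-partition weight is not simply the cell content, the weight map attached to the crystal in Section~4 must be arranged so that $x^{\operatorname{wt}(T)}$ reproduces it verbatim on every flagged reverse plane partition. Once that is in place, the corollary is immediate, as crystal isomorphisms preserve weight by definition and the character is additive over disjoint unions; no input beyond Theorem~\ref{th 1} is required.
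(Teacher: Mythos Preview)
Your proposal is correct and matches the paper's intended argument: the corollary is stated immediately after Theorem~\ref{th 1} with no separate proof, precisely because it follows by taking characters of both sides of the crystal isomorphism, using that $\operatorname{ch}(\mathcal{B}_\tau(\widehat{\beta(Q)}^\dagger))=\kappa_{\widehat{\beta(Q)}}$. Your check that the crystal weight agrees with the reverse-plane-partition weight (via the reading word $r_T$, which records each entry once per column) is the right verification, though note the crystal structure is set up in Section~3 rather than Section~4.
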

One of the beautiful ways to show that a polynomial $g$ in $\mathbb{Z}[x_1,x_2,\cdots]$ is key positive is to find a combinatorial model $\mathcal{G}$ whose character is $g$ and prove that $\mathcal{G}$ is a disjoint union of Demazure crystals. For example, Sami Assaf and Anne Schilling have proved the key positivity of the Schubert polynomial in this approach, see \cite{Schubert}. Also, the key positivity of the flagged skew Schur polynomial $s_{\lambda / \mu}(X_{\Phi})$ have been proved in a similar approach in \cite{RS}, \cite{krsv}. We also follow this approach in this paper. 

This paper is organized as follows. In Section $2,$ we define flagged reverse plane partition$, $ flagged dual stable Grothendieck polynomial $g_{\lambda/\mu}(X_\Phi),$ key polynomial and recall two important theorems of the Burge correspondence. Section $3$ contains the definitions of type $A_{n-1}$ crystal, crystal structure on $\Re ( \lambda / \mu, m), $ and Demazure crystal. We prove our main theorem, i.e., Theorem $1$ in Section $4$ and also give an alternative proof (without giving an explicit decomposition) in Section $5$. 

\section{Preliminaries}
A \emph{partition} $ \lambda=(\lambda_1,\lambda_2,\cdots)$ is defined as a weakly decreasing sequence of non-negative integers that has only finitely many non-zero parts. The \emph{weight} of $\lambda$ is the sum of its parts, which we denote by $|\lambda|$. Given a positive integer $n,$ $\mathcal{P}[n]$ denotes the set of all partitions with at most $n$ parts. The \emph{Young diagram} associated with the partition $\lambda$ is a collection of boxes that are top and left justified, where the $i^{th}$ row contains $\lambda_i$ boxes. Through the misuse of notation, we denote the Young diagram of $\lambda$ again by $\lambda$. For Young diagrams $\lambda$ and $\mu$, where $ \mu \subset \lambda $ (i.e., $\mu_i \leq \lambda_i \text{ }\forall i $), the skew diagram $\lambda / \mu$ is obtained by eliminating the boxes of $\mu $ from those of $\lambda$.
\subsection{ Reverse plane partition}
Consider $\lambda, \mu \in \mathcal{P}[n] $ such that $\mu \subset \lambda$. A \emph{reverse plane partition} of skew shape $\lambda/\mu$ is a filling of the skew diagram $\lambda/\mu$ with positive integers which is weakly increasing along both rows and columns. Obviously, a skew semi-standard tableau of shape $\lambda/\mu$ is a reverse plane partition of the same shape. We denote by $\Re(\lambda/\mu, m)$ the set of all reverse plane partitions of shape $\lambda/\mu$ with entries in $[m]=\{1,2,\cdots,m\}$.

For $R \in \Re(\lambda/\mu, m)$ we define the \emph{weight} of $R$ as $wt(R)=(r_1,r_2,\cdots,r_m),$ where $r_i$ is the number of columns of $R$ containing  $i$. Observe that this definition differs from the usual definition of weight of a semi-standard tableau $R$ where $r_i$ is the number of entries of $R$ equal to $i$. To each $T \in \Re(\lambda/\mu, m)$ we assign its row reading word $r_T$ as follows: omit all entries from $T$ which are equal to the entry immediately below it; then read $T$ from left to right and bottom to top. Also, the \emph{height} $h(T)$ of $T$ is defined as the sequence of positive integers whose $i^{th}$ part (from the left) is the row number of the $i^{th}$ letter (from the left) in $r_T$. Clearly, $h(T)$ is a weakly decreasing sequence. For example, see Figure \ref{Figure 1}. 
\begin{figure}
     \centering
\begin{tikzpicture}
    \draw (0,0) -- (0,1.4) -- (2.1, 1.4) -- (2.1, 2.8) -- (3.5,2.8);
    \draw (2.8,2.1)--(3.5,2.1)--(3.5,2.8);
    \draw (2.8,2.1)--(2.8,1.4)--(2.1,1.4);
    \draw (1.4,0.7)--(2.1,0.7)--(2.1,1.4);
    \draw (2.8,2.8) -- (2.8,2.1) -- (0.7, 2.1) -- (0.7, 0) -- (0,0);
    \draw (0, 0.7) -- (1.4, 0.7) -- (1.4, 2.8) -- (2.1, 2.8);
    \draw (0.35, 0.35) node {$4$};
    \draw (0.35, 1.05) node {$1$};
    \draw (1.05, 1.05) node {$1$};
    \draw (1.75, 1.05) node {$3$};
    \draw (1.05, 1.75) node {$\textcolor{red}{1}$};
    \draw (1.75, 1.75) node {$1$};
    \draw (2.45, 1.75) node {$2$};
    \draw (1.75, 2.45) node {$\textcolor{red}{1}$};
    \draw (2.45, 2.45) node {$\textcolor{red}{2}$};
    \draw (3.15, 2.45) node {$3$};
\end{tikzpicture}
     \caption{A reverse plane partition of shape $(5,4,3,1)/(2,1),$ weight $(3,1,2,1),$ row reading word $4113123 ,$ height $4333221$. The red entries do not contribute to the reading word.} 
     \label{Figure 1}
\end{figure}
\subsection{Flagged dual stable Grothendieck polynomial}
Lam and Pylyavskyy introduced dual stable Grothendieck polynomials in \cite{Lam}. The refined dual stable Grothendieck polynomials were introduced in \cite{Galashin_2016}. More generally, the row-flagged refined dual stable Grothendieck polynomials were studied in \cite{Kim}, see also \cite{hwang2021refined}. In this paper, we study a special case of the row-flagged refined dual stable Grothendieck polynomials, which we call the flagged dual stable Grothendieck polynomials.

A flag $\Phi$ = $(\Phi _1, \Phi _2, \cdots, \Phi_n)$ is defined as a weakly increasing sequence of positive integers, with the condition $\Phi_n = n$. $\mathcal{F}[n] $ denotes the set of all flags with $n$ parts. In the literature, it is not a requirement for a flag to possess the property of $\Phi_n = n$. However, for the purposes of our analysis, it is sufficient to consider flags that exhibit this characteristic.

Given $\lambda, \mu \in \mathcal{P}[n]$ and a flag $\Phi \in \mathcal{F}[n] ,$ $ \Re(\lambda/\mu,\Phi)$ denotes the set of all reverse plane partitions $R$ in $ \Re(\lambda/\mu, n)$ such that the entries in $i^{th}$ row of $R$ are $\leq \Phi_i $ for all $1 \leq i \leq n$. We define the \emph{flagged dual stable Grothendieck polynomial} $g_{\lambda/\mu}(X_\Phi)$ by 
$$ g_{\lambda/\mu}(X_\Phi)= \sum _{R} \characx^{wt(R)}, $$ where $R$ varies over $ \Re(\lambda/\mu,\Phi)$ and for $\alpha \in \mathbb{Z}^n _{+}\footnote{$ \mathbb{Z}^n _{+}$ is the set of all $n$-tuples of non-negative integers.},\text{ we let }  \characx^{\alpha}=x_1^{\alpha_1}x_2^{\alpha_2}\cdots x_n^{\alpha_n}$.
If $\Phi=(n,n,\cdots, n)$ then these polynomials become the dual stable Grothendieck polynomial $g_{\lambda/\mu}(x_1,x_2,\cdots,x_n)$ in \cite{Grothendieck}. Note that the flagged skew Schur polynomial $s_{\lambda/\mu}(X_\Phi)$ is the highest degree homogeneous component of $ g_{\lambda/\mu}(X_\Phi) $. 
\subsection{Key polynomial}Consider the polynomial ring $\mathbb{C}[\characx]$ in $n$ variables $\characx =(x_1,x_2,\cdots,x_n)$. Then for $i \in [n-1],$ the Demazure operators $T_i :\mathbb{C}[\characx] \rightarrow \mathbb{C}[\characx]$ 
are defined by: 
$$ (T_i f)(x_1, x_2, \cdots, x_n) = \frac{x_i\, f(x_1, x_2,  \cdots, x_n) - x_{i+1}\, f(x_1, \cdots, x_{i-1}, x_{i+1}, x_i, x_{i+2},  \cdots, x_n )}{x_i - x_{i+1}}$$

Given a permutation $w \text{ in the symmetric group }\mathfrak{S}_n$, we define $$T_w =  T_{i_1}T_{i_2} \cdots T_{i_k},$$ where $s_{i_1}s_{i_2} \cdots s_{i_k}$ is a reduced expression for $w$. Since $T_i$ satisfies the braid relations, $T_w$ is independent of the reduced expression.

For any $\alpha \in \mathbb{Z}^n _{+} ,$ the \emph{key polynomial} is defined by $\key_{\alpha}=T_w(\characx^{\alpha^{\dagger}}),$ where $\alpha^{\dagger}$ is the partition obtained by sorting the parts of $\alpha$ into decreasing order and $w$ is any permutation in $ \mathfrak{S}_n$ such that $w.\alpha^{\dagger}= \alpha$.
\subsection{The Burge correspondence}We recall the following two theorems related to the Burge correspondence \cite{fulton:yt} which we will use in Section 4. For positive integers $m,n,$ $\Mat_{m \times n}(\mathbb{Z}_{+})$ is the set of all $m \times n$ matrices with entries in $ \mathbb{Z}_{+}$. We assign a matrix $A=(a_{ij}) \in \Mat_{m \times n}(\mathbb{Z}_{+})$ to a two-rowed array $w_A$ as follows:
$$ w_{A}=\begin{bmatrix}         
i_t\hspace{0.2cm} \cdots \hspace{0.2cm}i_2 \hspace{0.2cm} i_1 \\
j_t \hspace{0.2cm} \cdots \hspace{0.2cm} j_2 \hspace{0.2cm}j_1 \\
           
\end{bmatrix}$$
so that for any pair $(i,j) \in [m] \times [n],$ there are $a_{ij}$ columns in $w_A$ equal to $
\begin{bmatrix}
   i \\
   j
\end{bmatrix}
$ and those are ordered in the following way.
\begin{itemize}
    \item $i_t \geq \cdots \geq i_2 \geq i_1 \geq 1 $.
    \item $ i_{k+1} > i_k$ whenever $ j_{k+1}>j_k$.
\end{itemize}
Further, given a flag $ \Phi $ in  $ \mathcal{F}[n],$ if $i_k \leq \Phi_{j_k} \forall k $ then $\textbf{i}$ is called $(\textbf{j},\Phi)$-compatible as in \cite{RS}, where $\textbf{i}=i_1i_2\cdots i_t$ and $\textbf{j}=j_1j_2\cdots j_t$.

\noindent \textbf{Examples:}
\begin{enumerate}
    \item $ \text{Take } A=
\begin{pmatrix}
1 & 2 & 1\\
0 & 1 & 3
\end{pmatrix}
,$ then $w_A =
\begin{bmatrix}
2 & 2 & 2 & 2 & 1 & 1 & 1 & 1\\
2 & 3 & 3 & 3 & 1 & 2 & 2 & 3
\end{bmatrix}.$

\item $ \text{If } B=
\begin{pmatrix}
1 & 3 & 1\\
0 & 1 & 1\\
0 & 0 & 2
\end{pmatrix}
,$ then $w_B =
\begin{bmatrix}
3 & 3 & 2 & 2 & 1 & 1 & 1 & 1 & 1\\
3 & 3 & 2 & 3 & 1 & 2 & 2 & 2 & 3
\end{bmatrix}.$
 In this case \textbf{i} is $ (\textbf{j}, (1,2,3))$-compatible, where $\textbf{i}=111112233$ and $\textbf{j}=322213233 $.

\end{enumerate}

\begin{theorem}
\begin{upshape}\cite[Appendix A, Proposition 2]{fulton:yt} \end{upshape}
    The Burge correspondence gives a one-to-one correspondence between $\Mat_{m \times n}(\mathbb{Z}_{+})$ and the set of pairs $(P, Q)$, where $P, Q$ are both semi-standard tableaux of same shape and entries of $P, Q$ are in $[n],[m]$ respectively.
\end{theorem}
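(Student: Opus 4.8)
Since this statement is recalled verbatim from \cite[Appendix~A]{fulton:yt}, the plan is to indicate how the bijection is built and why it works, following the template used for the classical Robinson--Schensted--Knuth correspondence. The forward map assigns to $A=(a_{ij})$ the pair $(P,Q)$ produced by \emph{Burge insertion}: first form the two-rowed array $w_A$ with the prescribed ordering, then scan its columns one at a time, writing $(i_k,j_k)$ for the entries of the $k$-th column (in the order they appear in $w_A$), and maintain a pair of tableaux, at each step updating $P$ by a Schensted-type bumping of the letter $j_k$ --- the Burge variant of insertion, whose bumping and tie-breaking rule is the one compatible with the ordering imposed on $w_A$ --- and placing $i_k$ into the single cell by which the shape of $P$ grows. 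Since every $j_k$ lies in $[n]$ and every $i_k$ in $[m]$, and exactly one cell is created per step, the output is automatically a pair of fillings of a common shape, with entries of $P$ in $[n]$ and of $Q$ in $[m]$; the content of $P$ records the column sums of $A$ and that of $Q$ its row sums.

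The first thing to verify is that the forward map is well defined, i.e.\ that $P$ and $Q$ are semistandard after every step. For $P$ this is the standard insertion lemma for the chosen bumping rule. For $Q$ this is exactly the point at which the two ordering conditions on $w_A$ are used: the chain $i_t\ge\cdots\ge i_1$ forces all cells recording a fixed value to be created during one contiguous block of steps, while the condition ``$i_{k+1}>i_k$ whenever $j_{k+1}>j_k$'' controls the behaviour of the $j$'s inside such a block, so that --- via the row-bumping lemma, which governs the relative position of the cells created by two consecutive insertions --- those cells form a skew strip of precisely the orientation that forces $Q$ to have weakly increasing rows and strictly increasing columns. This bookkeeping is the heart of the matter.

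To build the inverse one reverses each insertion step. Given a pair $(P,Q)$ of the same shape, locate in $Q$ the cell that was added last --- the cell carrying the largest entry, and among those the one singled out by the tie-breaking rule above --- delete it from $Q$, perform reverse bumping on $P$ starting from the corresponding cell to recover a column $(i_k,j_k)$, prepend this column to the array under construction, and repeat until both tableaux are empty. Two facts finish the argument: reverse bumping is the literal step-by-step inverse of Burge insertion, so feeding the reconstructed data back through the forward map returns $(P,Q)$; and the reconstructed two-rowed array automatically satisfies the two ordering conditions --- the dual of the calibration in the previous paragraph --- hence equals $w_A$ for a unique $A\in\Mat_{m\times n}(\mathbb{Z}_{+})$. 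So the two maps are mutually inverse and the correspondence is a bijection.

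The step I expect to be the main obstacle is the semistandardness of $Q$, together with its mirror image, the well-definedness of the inverse: one must trace how the newly created cells migrate under successive Burge insertions and check that the ordering rules defining $w_A$ are exactly --- not merely sufficiently --- what makes $Q$ a tableau and the inverse single-valued. A cleaner route that bypasses most of this casework is Fomin's growth-diagram description of the correspondence: install the local forward and backward rules on the $m\times n$ grid, verify the single local commutation relation, and read $P$ and $Q$ off the two boundary paths; the symmetry $A\leftrightarrow A^{\mathsf T}$ then also makes the exchange $P\leftrightarrow Q$ transparent.
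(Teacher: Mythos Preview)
The paper does not give its own proof of this statement: it is stated with a citation to \cite[Appendix~A, Proposition~2]{fulton:yt} and used as a black box, so there is nothing in the paper to compare your proposal against. Your outline is a faithful sketch of the standard argument found in Fulton's appendix (forward Burge insertion, the row-bumping lemma to control the shape increments and hence the semistandardness of $Q$, and reverse bumping for the inverse), and your remark about Fomin's growth diagrams is an equally valid alternative route; either would be acceptable as a proof, but for the purposes of this paper a bare citation suffices.
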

If A corresponds to $(P,Q)$ then we write it by $(w_A \rightarrow \emptyset)=(P,Q)$.

\begin{theorem}
\begin{upshape}\cite[Appendix A, Symmetry Theorem $($b$)$]{fulton:yt} \end{upshape}
\label{burge}
    If $A$ corresponds to $(P,Q)$ then $A^t$ corresponds to $(Q,P)$.
\end{theorem}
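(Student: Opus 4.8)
\medskip
\noindent\textbf{A plan for proving Theorem~\ref{burge}.}
I would prove this by recasting the Burge correspondence as a \emph{growth diagram} in the sense of Fomin. Place the matrix $A=(a_{ij})\in\Mat_{m\times n}(\mathbb{Z}_{+})$ on an $m\times n$ array of unit cells, attach a partition to each of the $(m+1)(n+1)$ lattice points, impose the empty partition at every lattice point on the bottom edge and on the left edge of the array, and then fill in the remaining partitions one cell at a time by the Burge \emph{local rule}: the partition at the north-east corner of a cell is a prescribed function of the three partitions at its south-west, north-west and south-east corners together with the matrix entry $a$ in that cell. Once the array is filled, the chain of partitions along the top edge encodes the insertion tableau $P$ and the chain down the right edge encodes the recording tableau $Q$. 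That this reproduces $(w_A\to\emptyset)=(P,Q)$ of Theorem~2 is the standard fact that growth diagrams compute the RSK/Burge correspondence, which I would either quote or reprove by induction on the number of cells.

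With this reformulation the theorem is almost a picture. Transposing the matrix, $A\mapsto A^{t}$, is exactly reflecting the whole array across its main diagonal; this reflection interchanges the top edge with the right edge, hence $P$ with $Q$, and it interchanges the bottom edge with the left edge, hence preserves the empty-partition boundary conditions. Therefore, \emph{provided} the Burge local rule is itself invariant under the diagonal reflection of a single cell --- that is, the north-east output is unchanged when the two ``side'' inputs (the north-west and south-east partitions) are exchanged, with the south-west input and the entry $a$ held fixed --- the growth diagram of $A^{t}$ is literally the mirror image of that of $A$, and reading off its boundary gives precisely $(Q,P)$. The whole proof thus reduces to this single local symmetry, which is a finite case check: the Burge rule branches according to whether the two side partitions coincide and according to the value of $a$, and in each branch the recipe for which rows of the south-west partition are incremented --- built from the Burge tie-breaking convention that matches the two bullet conditions defining $w_A$ in Section~2 --- is manifestly symmetric in the two side partitions.

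For readers who prefer to stay near Fulton's Appendix~A, I would note that the same symmetry can be extracted instead from the matrix-ball (``$X$-ray'') construction --- where each of the $a_{ij}$ balls at position $(i,j)$ is assigned a rank by a partial-order condition on $(i,j)$ that is symmetric under exchange of the two coordinates (again using the Burge tie-break), and $P,Q$ are read off from these ranks, so that $A\mapsto A^{t}$ simply swaps the coordinates and hence swaps $P$ and $Q$ --- or from Viennot's shadow-line (light-and-shadow) diagram, for which $A\mapsto A^{t}$ is again reflection across the diagonal. All three organizations make the symmetry transparent once the picture is set up.

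The one genuinely delicate point --- and the step I expect to take the most care --- is fixing the Burge conventions so that the symmetry is the \emph{clean} statement $(P,Q)\mapsto(Q,P)$ rather than Burge composed with a conjugation of shapes. The Burge correspondence differs from classical RSK precisely in the ordering of the columns of $w_A$ that share the same top entry (the two bullet conditions in Section~2), so the local rule --- respectively the ball-rank condition, respectively the shadow construction --- must be the variant that matches that ordering; and $P$ and $Q$ must be read off the two output edges in the transpose-compatible way (rather than, say, both off the top, or with $P$ and $Q$ swapped relative to the above), since an inconsistent choice there would silently destroy the symmetry.
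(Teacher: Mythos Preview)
The paper does not prove Theorem~\ref{burge} at all: it is quoted verbatim from Fulton's \emph{Young Tableaux}, Appendix~A, and used as a black box in Section~4. So there is no ``paper's own proof'' to compare against; the relevant comparison is with Fulton's argument, which (for the Burge/column-insertion variant) is exactly the matrix--ball construction you mention as your second option.

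Your growth-diagram plan is a correct and standard route to the symmetry. The reduction to a single local-rule check is right, and you have correctly flagged the only genuine subtlety: one must use the Burge local rule (not the RSK one), matching the column ordering of $w_A$ spelled out in Section~2, and read $P$ and $Q$ off the two boundary edges consistently. With those conventions fixed, the local rule is indeed symmetric in the north-west and south-east inputs, and the reflection argument goes through. Compared with Fulton's matrix--ball proof, the growth-diagram version has the advantage that the entire argument is a one-cell verification plus a picture, whereas the matrix--ball approach requires tracking the recursive ball-numbering through the whole array; on the other hand, Fulton's version stays closer to the insertion description actually used in the paper, so a reader of Section~4 would not need to absorb a second combinatorial model.
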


\section{Crystal structure on $\Re(\lambda/\mu, m)$}
\subsection{Crystal of type $A_{n-1}$}A \emph{crystal} of type $A_{n-1}$ consists of an underlying non-empty finite set $\mathcal{B}$ along with the maps 
$$ e_i,f_i :\mathcal{B} \rightarrow \mathcal{B} \sqcup \{0\} \ \ \text{for $i \in [n-1],$} $$ 
$$ wt: \mathcal{B} \rightarrow \mathbb{Z}^{n},$$ where $0 \not \in \mathcal{B} \text{ is an auxiliary element},$ satisfying the following axioms:

\begin{enumerate}
    \item If $x,y \in \mathcal{B}$ then $e_i(x)=y \text{ if and only if } f_i(y)=x$. In this case, we further assume $wt(y)-wt(x)= \epsilon_i-\epsilon_{i+1},$ where $ \epsilon_i \in \mathbb{Z}^{n}$ whose $i^{th}$ entry is $1$ and others are $0$.
    \item $\phi_i(x) - \varepsilon _i (x) = wt(x) \cdot (\epsilon_i - \epsilon_{i+1})$ $\forall x \in \mathcal{B}$ and $i \in \{1, 2, \cdots , n-1\}$,  where $\varepsilon_i(x)$ (respectively $\phi_i (x)$) is the maximum number of times $e_i$ (respectively $f_i$) can be applied to $x$  so that the result is in $\mathcal{B}$. 
\end{enumerate}
The maps $e_i,f_i$ are called the raising and lowering operators respectively. By misuse of notation, a crystal is often denoted by its underlying set $\mathcal{B}$.

The \emph{character} of a crystal $\mathcal{B}$ is defined by $ch(\mathcal{B}) := \displaystyle \sum _{ u \in \mathcal{B}} {\characx}^{wt(u)}$.

If $\mathcal{B}$ is a crystal then we assign with $\mathcal{B}$ a directed graph which we call the \emph{crystal graph} of $ \mathcal{B}$. The vertex set of this graph is the underlying set $\mathcal{B}$ of the crystal and we draw an edge labelled by $i$ between two vertices $x,y$ starting at $x$ and ending at $y$ if $f_i(x)=y$.
We say $\mathcal{B}$ is \emph{connected} if the crystal graph (considered as an undirected graph) is connected. A subset $\mathcal{B}^{'}$ of a crystal $\mathcal{B},$ which is a union of connected components of $\mathcal{B},$ inherits a crystal structure from the crystal structure of $\mathcal{B}$. Here we say $ \mathcal{B'}$ a \emph{full subcrystal} of $\mathcal{B}$.

\noindent \textbf{Example:} The \emph{Standard} type $A_{n-1}$ crystal contains the underlying set $ \{1, 2, \cdots, n\}$ and the maps
\begin{equation*}
f_i(j) = \left\{
    \begin{array}{ll}
         i+1 & \quad \text{if} \quad j=i  \\
         0 & \quad \text{otherwise}
    \end{array}     \right. 
\text{ for $i \in [n-1]$} \quad  \text{ and } \quad wt(i) = \epsilon _i \quad \text{ for $i \in [n]$}.    
\end{equation*}
We will denote this crystal by $\mathbb{W}_n$.

\noindent \textbf{Tensor products of crystals:} If $\mathcal{A} \text{ and } \mathcal{B}$ are two type $A_{n-1}$ crystals then the tensor product crystal $\mathcal{A} \otimes \mathcal{B}$ is also defined. Its underlying set is $\{x \otimes y: x \in \mathcal{A}, y \in \mathcal{B}\}$. The raising and lowering operators are defined as follows:
\begin{equation*}\label{e action}
 e_i(x \otimes y) = \left\{ 
    \begin{array}{ll}
        e_i(x) \otimes y & \quad \text{if} \quad \varepsilon_i(x) > \phi_i(y)  \\
        x \otimes e_i(y) & \quad \text{if} \quad \varepsilon_i(x) \leq \phi_i(y) 
    \end{array}   \right. 
\end{equation*}
and 
\begin{equation*}\label{f action}
 f_i(x \otimes y) = \left\{ 
    \begin{array}{ll}
        f_i(x) \otimes y & \quad \text{if} \quad \varepsilon_i(x) \geq \phi_i(y)  \\
        x \otimes f_i(y) & \quad \text{if} \quad \varepsilon_i(x) < \phi_i(y) 
    \end{array}   \right. 
\end{equation*}
We define $wt(x \otimes y)=wt(x) + wt(y)$.

It is understood that $x \otimes 0 = 0 \otimes y = 0$. Tensor products are associative \cite[\S 2.3]{bump-sch}.
\noindent \subsection{Crystal structure on $ \Re(\lambda/\mu, m)$}Consider the $r$-fold tensor product $\mathbb{W}_m^{\otimes r}$ of the Standard type $ A_{m-1}$ crystal $\mathbb{W}_m$. Then the crystal structure of $\Re(\lambda/\mu, m)$ is given as follows:

Let $h$ be a finite sequence of positive integers. Let $ \Re(\lambda/\mu, m, h)$ denote the set of all reverse plane partitions in $ \Re(\lambda/\mu, m)$ with height $h$. Note that $ \Re(\lambda/\mu, m, h)$ could be empty. For example, $\Re(\lambda/\mu, m, h) $ is empty if we take $\lambda =(5,3), \mu=(2,1), h=221$ and $m$ is any positive integer. Thus we have the following:
$$ \Re(\lambda/\mu, m) = \displaystyle \bigsqcup_{h} \Re(\lambda/\mu, m, h), $$ where $h$ varies over all possible finite sequences. Then $ \Re(\lambda/\mu, m, h)$ is given a structure of type $A_{m-1}$ crystal via the following embedding into $\mathbb{W}_m^{\otimes k} $ $(k$ is the number of parts in $h)$:$$ T \mapsto r_T = w_1 w_2 \cdots w_k \mapsto w_1 \otimes w_2 \otimes \cdots \otimes w_k$$
The image under this embedding is a full subcrystal of $\mathbb{W}_m^{\otimes k}, $ see \cite{Grothendieck}.

\subsection{Demazure crystals}For $\sigma \in \mathfrak{S}_{n}, \lambda \in \mathcal{P}[n],$ the \emph{Demazure crystal} $\mathcal{B}_{\sigma} (\lambda)$ is defined as:
\begin{equation*} \label{eq:demcrys}
  \mathcal{B}_{\sigma} (\lambda) := \{f_{i_1} ^{k_1} f_{i_2} ^{k_2} \cdots f_{i_p} ^{k_p}  T_{\lambda} : k_j \geq0 \} \textcolor{black}{\setminus \{0\}}, 
\end{equation*}
where $s_{i_1} s_{i_2} \cdots s_{i_p}$ is any reduced expression of $\sigma$ and  $T_{\lambda}$ is the unique semi-standard tableau of shape and weight both equal to $\lambda$. 

It is a familiar fact that the Demazure crystal $\mathcal{B}_{\sigma} (\lambda)$ is independent of the choice of the reduced expression of $\sigma$.

\noindent \textbf{Example: } 
$\mathcal{B}_{s_{k-1} \cdots s_2s_1}((1))$ = $\mathcal{R}((1), k)$ for $ k \in [n]$ and this Demazure crystal is denoted by $\mathbb{B}_k$.
\section{proof of the main theorem}
In this section, we give an explicit decomposition of $\Re(\lambda/\mu,\Phi)$ into a disjoint union of Demazure crystals.

A word $y=y_1y_2 \cdots y_s$ is said to be \emph{Yamanouchi word} ~\cite[\S 5.2]{fulton:yt} if for all $t \geq 1,$  the number of $i'$s in $ y_t \cdots y_s$ is at least the number of $(i+1)'$s in it for all $i \geq 1$. For example, $321211$ is a Yamanouchi word but $ 1213$ is not. 

Let $\lambda, \mu \in \mathcal{P}[n]$ with $\mu \subset \lambda$ and $ \Phi \in \mathcal{F}[n]$. Then we call a semi-standard tableau $Q$ is $(\lambda/\mu , \Phi)$-compatible if  $\exists $ $T \in  \Re(\lambda/\mu,\Phi)$ such that $r_T$ is a Yamanouchi word together with $(
\begin{bmatrix}
           h(T) \\
           r_T \\
           
\end{bmatrix} \rightarrow \emptyset)
=(-, Q) 
$. In this case, $T$ is actually unique. Because if there exists another $T'$ then
$(
\begin{bmatrix}
           h(T)\\
           r_T \\
           
\end{bmatrix} \rightarrow \emptyset)
=(T_{\nu}, Q)= $ 
$(
\begin{bmatrix}
           h(T') \\
           r_{T'} \\
           
\end{bmatrix} \rightarrow \emptyset),$ where shape$(Q)=\nu$. Then by Lemma $6$ of \cite{Grothendieck} $T=T'$.

\noindent \textbf{Example:} Consider $\lambda=(4,4,3,2), \mu=(2,1), \Phi=(1,2,3,4)$. Then the tableau on the left-hand side in Figure \ref{fig:my_label} is $ (\lambda/\mu,\Phi) $-compatible and it corresponds to the reverse plane partition on the right-hand side in Figure \ref{fig:my_label}.
\begin{figure}
     \centering
    \begin{tikzpicture}
    \draw (0,0)--(0,2.8)--(2.1,2.8);
    \draw (0.7,0)--(0.7,2.8);
    \draw (1.4,1.4)--(1.4,2.8);
    \draw (0,1.4)--(2.1,1.4)--(2.1,2.8);
    \draw (0,2.1)--(2.1,2.1);
    \draw (0,0)--(0.7,0);
    \draw (0,0.7)--(0.7,0.7);
    \draw (0.35, 0.35) node {$4$};
    \draw (0.35, 1.05) node {$3$};
    \draw (0.35, 1.75) node {$2$};
    \draw (1.05, 1.75) node {$3$};
    \draw (1.75, 1.75) node {$4$};
    \draw (0.35, 2.45) node {$1$};
    \draw (1.05, 2.45) node {$2$};
    \draw (1.75, 2.45) node {$2$};
    \end{tikzpicture}
  \begin{tikzpicture}
  \draw (0,0) node {$ $};
  \draw (1.4,0) node {$ $};
  \end{tikzpicture}
  \begin{tikzpicture}
    \draw (0,0) -- (0,1.4) -- (2.1, 1.4) -- (2.1, 2.8) -- (2.8,2.8);
    \draw (2.8,2.1)--(2.8,1.4)--(2.1,1.4);
    \draw (1.4,0.7)--(2.1,0.7)--(2.1,1.4);
    \draw (2.8,2.8) -- (2.8,2.1) -- (0.7, 2.1) -- (0.7, 0) -- (0,0);
    \draw (0.7,0)--(1.4,0)--(1.4,0.7);
    \draw (0, 0.7) -- (1.4, 0.7) -- (1.4, 2.8) -- (2.1, 2.8);
    \draw (0.35, 0.35) node {$2$};
    \draw (1.05, 0.35) node {$4$};
    \draw (0.35, 1.05) node {$\textcolor{red}{2}$};
    \draw (1.05, 1.05) node {$2$};
    \draw (1.75, 1.05) node {$3$};
    \draw (1.05, 1.75) node {$1$};
    \draw (1.75, 1.75) node {$1$};
    \draw (2.45, 1.75) node {$2$};
    \draw (1.75, 2.45) node {$\textcolor{red}{1}$};
    \draw (2.45, 2.45) node {$1$};
\end{tikzpicture}
\caption{An example of $(\lambda/\mu,\Phi)$-compatible tableau. }
     \label{fig:my_label}
\end{figure}

Choose a $(\lambda/\mu , \Phi)$-compatible tableau $Q$ and let shape$(Q)=\nu$. Then we define the following:
$$\mathcal{C}(\lambda/\mu,Q,\Phi):= \{ T \in \Re(\lambda/\mu,\Phi) : (
\begin{bmatrix}
           h(T)\\
           r_T \\
           
\end{bmatrix} \rightarrow \emptyset)
=(-, Q) 
  \}.$$
  Thus $ \Re(\lambda/\mu,\Phi) = \displaystyle\bigsqcup_{Q} \mathcal{C}(\lambda/\mu,Q,\Phi) ,$ where $Q$ runs over all $(\lambda/\mu, \Phi)$-compatible tableaux. We will show that $\mathcal{C}(\lambda/\mu, Q,\Phi)$ is crystal isomorphic to $ \mathcal{B}_\sigma {(\nu)}$ for some permutation $\sigma,$ which means there is a weight-preserving bijection $\Psi: \mathcal{C}(\lambda/\mu, Q,\Phi) \rightarrow \mathcal{B}_\sigma {(\nu)}$ which intertwines with the raising and lowering operators.  
  \bremark
  For every $T \in \mathcal{C}(\lambda/\mu,Q,\Phi),$ $ h(T)$ are all same, say $h_0$ (see ~\cite[\S 2]{Grothendieck}).
  \eremark
           
For $\alpha \in \mathbb{Z}^n_{+} ,$ we denote by \textbf{b}($\alpha$) the word $ b^{(n)} \cdots b^{(2)}b^{(1)}$ in which $b^{(j)}$ consists of a string of $\alpha_j$ copies of $j$. For example, $\textbf{b}(3,1,4,0,2) =5533332111$.

Given $\alpha \in \mathbb{Z}^n_{+} $ and a flag $\Phi$ in $\mathcal{F}[n],$ we define $\mathcal{W}(\alpha, \Phi)$ as the set of all words $\textbf{v}= v_n \cdots v_2 v_1$ such that each $v_i$ is a maximal row word (i.e., the last letter of $v_i$ is greater than the first letter of $v_{i-1}$) of length $\alpha_i$ together with the following properties:
\begin{itemize}
    \item each letter in $v_i$ can be at most $\Phi_i$.
    \item $(
    \begin{bmatrix}
    \textbf{b}(\alpha)\\
    \textbf{v}
    \end{bmatrix} \rightarrow \emptyset) =(-$, $\text{key}(\alpha)) ,$ where $\text{key}(\alpha)$ is the unique semi-standard tableau of shape $\alpha^{\dagger}$ and weight $\alpha$.
\end{itemize}

Combining Theorem 12 and Theorem 13 of \cite{krsv} we have the following theorem.
\begin{theorem}
    For any $\beta \in \mathbb{Z}^n_{+} $ and a flag $\Phi \in \mathcal{F}[n],$ either $\mathcal{W}(\beta, \Phi)$ is empty or there is a one-to-one correspondence between $\mathcal{W}(\beta, \Phi)$ and $\mathcal{B}_{\sigma}(\hat{\beta}^{\dagger})$ via $ \textbf{v} \mapsto P(\textbf{v}) ,$ for some $\hat{\beta} \in \mathbb{Z}^n_{+} $ with $\beta^{\dagger} = \hat{\beta}^{\dagger}$. Here $\sigma$ is any permutation such that $\sigma. \hat{\beta}^{\dagger}=\hat{\beta}$ and $P(\textbf{w})$ denotes the unique semi-standard tableau Knuth equivalent to a given word \textbf{w}.
    \label{ABC}
\end{theorem}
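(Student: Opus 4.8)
The plan is to deduce the statement by chaining together Theorems 12 and 13 of \cite{krsv}; the real work is to match the word model $\mathcal W(\beta,\Phi)$ used here with the tableau / compatible-sequence model used there, and to check that the parameters $\hat\beta$ and $\sigma$ produced by the two cited theorems are mutually consistent. First I would dispose of the trivial case: if $\mathcal W(\beta,\Phi)=\emptyset$ there is nothing to prove, so assume from now on that it is nonempty.

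Next I would run the following chain of identifications. The map $\textbf{v}\mapsto P(\textbf{v})$ is injective on $\mathcal W(\beta,\Phi)$: for $\textbf{v}=v_n\cdots v_1\in\mathcal W(\beta,\Phi)$ the matrix $A(\textbf{v})$ whose two-rowed array is $\left[\begin{smallmatrix}\textbf{b}(\beta)\\ \textbf{v}\end{smallmatrix}\right]$ is recovered from $\textbf{v}$ alone — its $(i,j)$ entry counts the occurrences of $j$ in the block $v_i$, and since $\beta$ fixes the lengths of the blocks and each $v_i$ is a weakly increasing row word, these block-contents determine $\textbf{v}$ — and by the defining condition of $\mathcal W(\beta,\Phi)$ the Burge correspondence sends $A(\textbf{v})$ to $(P(\textbf{v}),\text{key}(\beta))$ with fixed recording tableau $\text{key}(\beta)$, so injectivity follows from Theorem~2. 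It then remains to compute the image $\{P(\textbf{v}):\textbf{v}\in\mathcal W(\beta,\Phi)\}$, and this is exactly what the two cited theorems supply: Theorem 12 of \cite{krsv} produces a rearrangement $\hat\beta$ of the parts of $\beta$ (so $\beta^\dagger=\hat\beta^\dagger$) and identifies this image with the explicit set of semistandard tableaux of shape $\hat\beta^\dagger$ cut out by the flag / right-key conditions of \cite{RS}, while Theorem 13 of \cite{krsv} recognizes that same set of tableaux as the Demazure crystal $\mathcal B_\sigma(\hat\beta^\dagger)$ for any permutation $\sigma$ with $\sigma.\hat\beta^\dagger=\hat\beta$ — a description independent of the choice of such $\sigma$, by the reduced-word independence of $\mathcal B_\sigma(\hat\beta^\dagger)$ recalled in Section~3. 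Composing these identifications yields the asserted bijection $\textbf{v}\mapsto P(\textbf{v})$ from $\mathcal W(\beta,\Phi)$ onto $\mathcal B_\sigma(\hat\beta^\dagger)$.

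The step I expect to be the main obstacle is the image computation, i.e.\ the reconciliation of conventions hidden in it: \cite{RS} and \cite{krsv} phrase compatibility in terms of column-strict tableaux, compatible sequences and right keys, whereas here the data is packaged as the single word $\textbf{v}$ together with the condition that its Burge recording tableau equals $\text{key}(\beta)$. One has to translate this condition faithfully — showing in particular that the ``compatible'' data of \cite{RS} corresponds bijectively to the maximal-row-word decompositions $v_n\cdots v_1$ enumerated by $\mathcal W(\beta,\Phi)$, and that the rearrangement $\hat\beta$ coming from Theorem 12 is the one for which Theorem 13 applies. Once this dictionary is set up, everything else is formal bookkeeping and one simply assembles Theorems 12 and 13 of \cite{krsv} into the stated one-to-one correspondence.
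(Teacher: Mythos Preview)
Your proposal is correct and follows exactly the approach indicated in the paper, which simply states that the theorem is obtained by combining Theorems 12 and 13 of \cite{krsv} without giving any further details. You have supplied the dictionary-checking and bookkeeping that the paper leaves implicit, but the route is the same.
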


\begin{proposition}
  There is a bijection $\Omega$ between $\mathcal{C}(\lambda/\mu,Q,\Phi)$ and $\mathcal{W}(\beta(Q), \Phi)$ such that if $T \mapsto \Omega(T)$ then $r_T$ and $\Omega(T)$ are Knuth equivalent, where $\beta(Q)$ is the weight of the left key tableau $K_{\_}(Q)$
\footnote{ For definition and computation of $ K_{\_}(Q)$ see \cite{RS}, \cite{plactic}, \cite{willis}.} of $Q$.
\label{Pro}
\end{proposition}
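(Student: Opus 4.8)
The plan is to realize the bijection $\Omega$ concretely by tracking how the Burge correspondence interacts with the row reading word. Given $T \in \mathcal{C}(\lambda/\mu,Q,\Phi)$, its reading word $r_T$ together with the height sequence $h(T) = h_0$ (which is constant on $\mathcal{C}(\lambda/\mu,Q,\Phi)$ by the Remark) forms a two-rowed array; applying Burge insertion gives $(\begin{bmatrix} h_0 \\ r_T\end{bmatrix} \to \emptyset) = (P, Q)$ for a fixed $Q$ of shape $\nu$ and some insertion tableau $P = P(r_T)$. The first step is to identify what $P(r_T)$ can be: since $r_T$ ranges over a full subcrystal of $\mathbb{W}_m^{\otimes k}$ and Burge insertion commutes with the crystal operators, the tableaux $P(r_T)$ form a Demazure-type family, and in particular their shapes are all $\nu$. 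I would then apply the Symmetry Theorem (Theorem~\ref{burge}): the transpose array corresponds to $(Q,P)$, and because $Q$ is $(\lambda/\mu,\Phi)$-compatible, $r_T$ is forced to be such that the recording data on the $Q$-side is governed by the flag $\Phi$. Concretely, I expect that feeding $Q$ back through Burge with the compatibility hypothesis shows $P(r_T)$ has weight given by $\beta(Q)$, the weight of the left key $K_{\_}(Q)$ — this is exactly the point where the left-key machinery of Reiner–Shimozono enters, since $\beta(Q)$ is designed to record precisely the "column content" data that $wt(T)$ and $h(T)$ jointly carry.

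Next I would define $\Omega(T)$ to be the word $\mathbf{v} = v_n \cdots v_2 v_1$ obtained by grouping $r_T$ according to the row structure dictated by $h_0$: the letters of $r_T$ coming from row $i$ of $T$ (after deleting entries equal to the one below) form a weakly increasing block, and maximality of these blocks (the last letter of $v_i$ exceeds the first letter of $v_{i-1}$) is exactly the condition that consecutive rows of $T$ do not "merge" in the reading word, i.e.\ a consequence of the reverse-plane-partition column-strictness encoded in $h_0$. The flag bound "each letter of $v_i$ is at most $\Phi_i$" transfers directly from the flag condition on $T$, since row $i$ of $T$ contributes the block $v_i$. It remains to check the two Burge conditions defining $\mathcal{W}(\beta(Q),\Phi)$: that $\mathbf{b}(\beta(Q))$ is the correct top row — this follows once we know $wt$-data of $\mathbf{v}$ agrees with $\beta(Q)$, combined with the standard fact that the recording tableau determines the top row in Burge — and that $(\begin{bmatrix}\mathbf{b}(\beta(Q)) \\ \mathbf{v}\end{bmatrix} \to \emptyset) = (-, \mathrm{key}(\beta(Q)))$. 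This last identity I would deduce by transposing: the bottom-row insertion tableau $P(\mathbf{v}) = P(r_T)$ has shape $\beta(Q)^\dagger$ and the compatibility of $Q$ forces the recording tableau to be the key tableau of that weight, again via Theorem~\ref{burge} and Lemma~6 of \cite{Grothendieck} for uniqueness.

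For surjectivity and injectivity of $\Omega$: injectivity is immediate because $r_T$ (with the fixed height $h_0$) determines $T$ — this is Lemma~6 of \cite{Grothendieck} again, reading a reverse plane partition off its reading word and height. For surjectivity, given $\mathbf{v} \in \mathcal{W}(\beta(Q),\Phi)$, I would reconstruct $T$ by placing the block $v_i$ into row $h_0$-th group, using that the maximality condition guarantees the resulting filling is a legitimate reverse plane partition of shape $\lambda/\mu$, and that the Burge condition on $\mathbf{v}$ guarantees the pair $(P(\mathbf{v}), \text{recording tableau})$ matches $(P(r_T), Q)$ so that $T \in \mathcal{C}(\lambda/\mu,Q,\Phi)$. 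The Knuth-equivalence claim $r_T \sim \Omega(T)$ is then automatic: $\mathbf{v}$ is by construction a rearrangement of $r_T$ into row-reading order of the same insertion tableau, hence Knuth equivalent.

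The main obstacle I anticipate is the precise bookkeeping in the first step — showing that the weight of $P(r_T)$ (equivalently the shape $\nu = \beta(Q)^\dagger$ together with the multiset of column-contents) is exactly recorded by $\beta(Q) = wt(K_{\_}(Q))$. This requires carefully combining the Symmetry Theorem with the definition of the left key, and is where the argument is genuinely using the theory of \cite{RS} rather than formal crystal manipulations; once that weight identification is in hand, matching up the two defining conditions of $\mathcal{W}(\beta(Q),\Phi)$ with the reverse-plane-partition and flag conditions on $T$ is comparatively routine.
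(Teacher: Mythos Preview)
Your construction of $\Omega(T)$ as ``$r_T$ regrouped by the row blocks dictated by $h_0$'' does not land in $\mathcal{W}(\beta(Q),\Phi)$, and this is not just a bookkeeping detail but a genuine structural gap. The block lengths of $r_T$ under $h_0$ are the multiplicities of each value in $h_0$; since $h_0$ is the top row of the Burge array and $Q$ is the recording tableau, those multiplicities are exactly $wt(Q)$, not $\beta(Q)=wt(K_{\_}(Q))$. Likewise the recording tableau of $\left(\begin{smallmatrix}\mathbf{b}(\alpha)\\ r_T\end{smallmatrix}\right)$ with $\alpha=wt(Q)$ is $Q$ itself, not $\mathrm{key}(\beta(Q))$. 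So unless $Q$ happens already to be a key tableau, your candidate word fails both the length condition and the recording-tableau condition in the definition of $\mathcal{W}(\beta(Q),\Phi)$. The maximality condition can also fail: in a skew shape where rows $i$ and $i+1$ do not share a column, the last letter of row $i+1$ need not exceed the first letter of row $i$ (e.g.\ $\lambda/\mu=(2,1)/(1)$ with entries $5$ in row~$1$, $1$ in row~$2$ gives $r_T=15$, blocks $v_2=1$, $v_1=5$).

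What the paper actually does is precisely the step you are missing: starting from the Burge pair $(P(r_T),Q)$, it passes via the Symmetry Theorem to the transposed array giving $(Q,P(r_T))$, then \emph{replaces} the insertion word so that the insertion tableau becomes $K_{\_}(Q)$ while the recording tableau $P(r_T)$ is held fixed (this replacement, and the fact that flag-compatibility survives it, is the content of Lemmas~5--6 of \cite{krsv}), and finally transposes back to obtain a new word $\mathrm{rev}(\mathbf{u})$ with Burge pair $(P(r_T),K_{\_}(Q))$. That word is $\Omega(T)$; it is Knuth equivalent to $r_T$ because both insert to $P(r_T)$, but it is \emph{not} $r_T$ in general. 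Your proposal conflates ``Knuth equivalent to $r_T$'' with ``equal to $r_T$ up to regrouping'', and so never performs the $Q\to K_{\_}(Q)$ transition that makes the target $\mathcal{W}(\beta(Q),\Phi)$ rather than something indexed by $wt(Q)$. For surjectivity the paper also argues differently, via the crystal operators applied to the Yamanouchi element $T_0$, rather than by direct reconstruction.
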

\begin{proof}

      This proposition is closely related to Proposition 6 in \cite{krsv}. Let $T \in \mathcal{C}(\lambda/\mu, Q,\Phi)$. Assume that $M(T)$ is the matrix that corresponds to the two-rowed array
    $
\begin{bmatrix}
            h_0 \\
            r_T \\
           
\end{bmatrix}$. Suppose that $
\begin{bmatrix}
           \text{rev}(\textbf{i}) \\
           \text{rev}(\textbf{a}) \\
           
\end{bmatrix}$ is the two-rowed array associated to $M(T)^t ,$ where rev($\textbf{i}$) is the word $\cdots i_2i_1$ if $\textbf{i}=$$i_1i_2 \cdots $. Thus \textbf{i} is $(\textbf{a}, \Phi)$-compatible because $T$ $\in \Re(\lambda/\mu, \Phi)$ and by definition of $h(T)$. Then $(
\begin{bmatrix}
           h_0 \\
           r_T \\
           
\end{bmatrix} \rightarrow \emptyset)
=(P(r_T), Q) \implies 
$
$(
\begin{bmatrix}
           \text{rev}(\textbf{i}) \\
           \text{rev}(\textbf{a}) \\
           
\end{bmatrix} \rightarrow \emptyset)=(Q,P(r_T))
$ (by Theorem \ref{burge}).

Let $\textbf{a}'$ be the unique word such that 
$(
\begin{bmatrix}
           \text{rev}(\textbf{i}) \\
           \text{rev}(\textbf{a}') \\
           
\end{bmatrix} \rightarrow \emptyset)=(K_{\_}(Q),P(r_T))$. 
Then by Lemma 5 and Lemma 6 of \cite{krsv}, \textbf{i} is $(\textbf{a}', \Phi)$-compatible. Let $ \begin{bmatrix}
           \text{rev}(\textbf{j}) \\
           \text{rev}(\textbf{u}) \\
           
\end{bmatrix} $
be the two-rowed array corresponding to the matrix $B$ so that $ B^t$ corresponds to
$
\begin{bmatrix}
           \text{rev}(\textbf{i}) \\
           \text{rev}(\textbf{a}') \\
           
\end{bmatrix}
$. Thus $(
\begin{bmatrix}
           \text{rev}(\textbf{j}) \\
           \text{rev}(\textbf{u}) \\
           
\end{bmatrix} \rightarrow \emptyset)=(P(r_T),$ $ K_{\_}(Q))
$ (by Theorem \ref{burge}).
Therefore by Corollary 12 of \cite{RS}, we have rev(\textbf{u}) $ \in  \mathcal{W}(\beta(Q), \Phi)$. We define $\Omega(T) = $ rev$(\textbf{u})$.

Let $T,T' \in \mathcal{C}(Q,\lambda/\mu,\Phi)$ such that $\Omega(T)= \Omega(T') \implies \text{rev}(\textbf{u})= \text{rev}(\textbf{u}')  $. This implies $P(r_{T})=P(r_{T'})$. So $ \begin{bmatrix}
           h_0 \\
           r_{T} \\
           
\end{bmatrix} =
\begin{bmatrix}
           h_0 \\
           r_{T'}\\
           
\end{bmatrix}$.
Then Lemma $6$ in \cite{Grothendieck} gives $T=T'$. Thus $\Omega$ is injective. Let $\textbf{s} \in \mathcal{W}(\beta(Q), \Phi) $. Then by Theorem \ref{ABC}, $P(\textbf{s}) \in \mathcal{B}_{\tau}(\widehat{\beta(Q)}^{\dagger}),$ where $\tau$ is any permutation such that $\tau.
\widehat{\beta(Q)}^{\dagger}=\widehat{\beta(Q)}$. Note that $ \widehat{\beta(Q)}^{\dagger} =$ shape$(Q)=\nu$. Let $s_{i_1} s_{i_2} \cdots s_{i_q}$ be any reduced expression of $\tau$. Then $P(\textbf{s})= f_{i_1} ^{k_1} f_{i_2} ^{k_2} \cdots f_{i_q} ^{k_q} T_{\nu}$ for some $ (k_1,k_2,\cdots,k_q) \in \mathbb{Z}^q _{+}$. Take $f=f_{i_1} ^{k_1} f_{i_2} ^{k_2} \cdots f_{i_q} ^{k_q}$. By definition of $Q,$ there exists a unique $T_0 \in \Re (\lambda/\mu,\Phi)$ such that $r_{T_0}$ is a Yamanouchi word of weight $\nu$ and 
$(\begin{bmatrix}
           h_0 \\
          r_{T_0} \\
\end{bmatrix} \rightarrow \emptyset)= (T_{\nu}, Q)$. Then using Lemma $7$ in \cite{Grothendieck} and Proposition 29 in \cite{RS} we can say $f(T_0) \in \mathcal{C}(Q,\lambda/\mu,\Phi)$ such that $\Omega(f(T_0))= \textbf{s}$. Therefore $\Omega$ is surjective. Clearly, $r_T$ and $\Omega(T) $ are Knuth equivalent.
\end{proof}

The following proposition asserts our main theorem. 
\begin{proposition}
 $ \Psi: \mathcal{C}(Q,\lambda/\mu,\Phi) \rightarrow  \mathcal{B}_{\tau}(\widehat{\beta(Q)}^{\dagger})$ via $T \mapsto P(\Omega(T))$ is a weight-preserving bijection which intertwines the crystal raising and lowering operators. Here $\tau$ is any permutation such that $\tau.
 \widehat{\beta(Q)}^{\dagger}=\widehat{\beta(Q)}$.
\end{proposition}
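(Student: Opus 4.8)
The plan is to prove that $\Psi = P \circ \Omega$ is a crystal isomorphism by decomposing it through the bijections already established and checking that each factor respects the crystal structure. First I would recall from Proposition~\ref{Pro} that $\Omega : \mathcal{C}(Q,\lambda/\mu,\Phi) \to \mathcal{W}(\beta(Q),\Phi)$ is a bijection sending $T$ to a word Knuth-equivalent to $r_T$, and from Theorem~\ref{ABC} that $\textbf{v} \mapsto P(\textbf{v})$ is a bijection $\mathcal{W}(\beta(Q),\Phi) \to \mathcal{B}_{\tau}(\widehat{\beta(Q)}^{\dagger})$. Composing these immediately gives that $\Psi$ is a well-defined bijection, so the content of the proposition is entirely about (i) weight preservation and (ii) intertwining $e_i, f_i$. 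For weight preservation: the crystal structure on $\mathcal{C}(Q,\lambda/\mu,\Phi)$ is the one inherited from the embedding $T \mapsto r_T \mapsto w_1 \otimes \cdots \otimes w_k$ into $\mathbb{W}_m^{\otimes k}$, under which $wt(T)$ is the content (number of each letter) of the word $r_T$. Since $\Omega(T)$ is Knuth-equivalent to $r_T$, it has the same content, and since $P(\Omega(T))$ is the insertion tableau of $\Omega(T)$ it again has the same content; the weight of a tableau in the Demazure crystal is its content as well. So weight preservation follows once I spell out that all three objects — $r_T$, $\Omega(T)$, $P(\Omega(T))$ — carry the same content, hence the same $wt$.

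For the intertwining property, the key observation is that the crystal operators on all the relevant sets are ultimately the tableau/word crystal operators of type $A_{m-1}$, and these are compatible with Knuth equivalence. Concretely, on $\mathcal{C}(Q,\lambda/\mu,\Phi)$ the operators $e_i, f_i$ are defined by their action on the reading word $r_T$ viewed in $\mathbb{W}_m^{\otimes k}$ (this is the full-subcrystal structure cited from \cite{Grothendieck}); on $\mathcal{W}(\beta(Q),\Phi) \subseteq \mathbb{W}_m^{\otimes \ell}$ they are again the tensor-word operators; and on $\mathcal{B}_{\tau}(\widehat{\beta(Q)}^{\dagger})$, being a Demazure subcrystal of the full tableau crystal $B(\nu)$, they are the restrictions of the tableau operators, which agree with the word operators under $P$. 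The classical fact I would invoke is that if two words $\textbf{w}, \textbf{w}'$ are Knuth-equivalent then applying $f_i$ (or $e_i$) to them yields Knuth-equivalent words, equivalently $P(f_i \textbf{w}) = f_i(P(\textbf{w}))$; this is standard (see e.g.\ the crystal structure on words and Lemma~7 of \cite{Grothendieck} already used in Proposition~\ref{Pro}). Thus $P(f_i(\Omega(T))) = f_i(P(\Omega(T)))$. It remains to connect $f_i$ on $r_T$ with $f_i$ on $\Omega(T)$: since these two words are Knuth-equivalent, $f_i$ applied to either gives Knuth-equivalent results, and $\Omega$ was constructed precisely so that $\Omega(f_i T)$ is Knuth-equivalent to $f_i(r_T)$ — this is implicit in the surjectivity argument of Proposition~\ref{Pro}, where $f(T_0) \in \mathcal{C}(Q,\lambda/\mu,\Phi)$ was produced with $\Omega(f(T_0)) = \textbf{s}$ using Lemma~7 of \cite{Grothendieck} and Proposition~29 of \cite{RS}. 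Chaining these equalities gives $\Psi(f_i T) = f_i \Psi(T)$, and likewise for $e_i$; the case where one side is $0$ is handled by the same compatibility (a word is killed by $f_i$ iff any Knuth-equivalent word is).

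The step I expect to be the main obstacle is making precise the claim that the crystal operators $e_i, f_i$ are "the same" on the three models despite their ostensibly different definitions — in particular, that the structure $\Re(\lambda/\mu, m, h)$ inherits as a full subcrystal of $\mathbb{W}_m^{\otimes k}$ (via the height-$h_0$ reading word) restricts correctly, and that $\Omega$ does not merely preserve underlying sets but is equivariant. The cleanest way around this is to route everything through the single statement "$f_i$ and $e_i$ commute with Knuth equivalence of words, and the crystal structure on tableaux is transported from words via $P$": once that is in hand, the equivariance of $\Omega$ for $f_i$ reduces to the Knuth-equivalence of $\Omega(f_i T)$ and $f_i(\Omega(T))$, which follows from $\Omega(T) \equiv r_T$ together with the already-cited Lemma~7 of \cite{Grothendieck}. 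So the proof will be short: unwind the definitions, cite weight-via-content, cite Knuth-compatibility of crystal operators, and cite the construction of $\Omega$ from Proposition~\ref{Pro}. I would also remark that the disjoint-union decomposition $\Re(\lambda/\mu,\Phi) = \bigsqcup_Q \mathcal{C}(Q,\lambda/\mu,\Phi)$ established earlier then yields Theorem~\ref{th 1} immediately, and taking characters yields the Corollary, since $ch(\mathcal{B}_{\tau}(\nu)) = \key_{\widehat{\beta(Q)}}$ by definition of the key polynomial as a Demazure character.
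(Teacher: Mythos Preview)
Your proposal is correct and follows essentially the same route as the paper's proof: bijection from Theorem~\ref{ABC} composed with Proposition~\ref{Pro}, weight preservation because Knuth-equivalent words share the same content, and intertwining with $e_i,f_i$ via the compatibility of crystal operators with Knuth equivalence (Lemma~7 of \cite{Grothendieck}; the paper additionally cites Corollary~8 there). Your write-up is considerably more expansive than the paper's three-line version, but the underlying argument is the same; in fact your observation that $P(\Omega(T)) = P(r_T)$ lets one bypass any separate discussion of equivariance of $\Omega$ and reduce everything to $r_{f_iT} = f_i(r_T)$ (the definition of the crystal structure on $\Re(\lambda/\mu,m,h)$) together with $P(f_i\textbf{w}) = f_i P(\textbf{w})$.
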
 
\begin{proof}
           
           
By Theorem \ref{ABC} and Proposition \ref{Pro}, $ \Psi$ is a bijection. Evidently, $\Psi$ is a weight-preserving. Also, it follows from Lemma 7, Corollary 8 of \cite{Grothendieck} and characteristics of Knuth equivalence that $\Psi$ commutes with the raising and lowering operators.
\end{proof}
\section{Alternative proof }
In this section, we give an alternative proof of Theorem \ref{th 1} without giving an explicit decomposition. For this proof we use Lemma $1$ in \cite{krsv} which says the following.
\begin{lemma}
\label{lemma}
    For a flag $\Phi \in \mathcal{F}[n]$ and $ \rho \in \mathbb{Z}_+ ^n,$ $\mathbb{B}_{\Phi} ^{\rho}$ is a disjoint union of Demazure crystals, where
    $$\mathbb{B}_{\Phi}  ^{\rho}:= \mathbb{B}_{\Phi_n} ^{\otimes \rho_n} \otimes \cdots \otimes  \mathbb{B}_{\Phi_2} ^{\otimes \rho_2} \otimes \mathbb{B}_{\Phi_1}^{\otimes \rho_1}.$$
\end{lemma}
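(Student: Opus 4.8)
The statement to establish is Lemma~\ref{lemma}: that $\mathbb{B}_{\Phi}^{\rho}$, the indicated tensor product of Demazure crystals $\mathbb{B}_k = \mathcal{B}_{s_{k-1}\cdots s_1}((1))$, is a disjoint union of Demazure crystals. The plan is to reduce everything to the single basic fact that a tensor product of the form $\mathcal{B}_{\sigma}(\lambda)\otimes\mathbb{B}_k$ decomposes as a disjoint union of Demazure crystals, and then to iterate. First I would recall (or isolate as a sublemma) the key structural input: if $\mathcal{D}$ is a disjoint union of Demazure crystals inside some $\mathbb{W}_m^{\otimes r}$, then so is $\mathcal{D}\otimes\mathbb{B}_k$. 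This is exactly the content of Lemma~1 of \cite{krsv} in the form we need, and it rests on the ``Demazure $\times$ standard'' decomposition rule coming from the theory of excellent filtrations / the fact that $\mathcal{B}_{\sigma}(\lambda)\otimes\mathbb{B}_k$ has a filtration whose subquotients are again Demazure crystals $\mathcal{B}_{\sigma'}(\lambda')$ with $\lambda'$ obtained by adding a box to $\lambda$.

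Granting the sublemma, the proof is a straightforward induction on the total number $N=\rho_1+\rho_2+\cdots+\rho_n$ of tensor factors. The base case $N=0$ is the trivial crystal (a single highest weight element), which is $\mathcal{B}_e((0))$. For the inductive step, write
$$\mathbb{B}_{\Phi}^{\rho} = \mathbb{B}_{\Phi}^{\rho'}\otimes \mathbb{B}_{\Phi_j},$$
where $\mathbb{B}_{\Phi_j}$ is the rightmost tensor factor (so $\rho'$ is $\rho$ with one copy removed from the appropriate slot, and $j$ is the smallest index with $\rho_j>0$, or more precisely the slot governing the last factor in the fixed ordering). By the induction hypothesis $\mathbb{B}_{\Phi}^{\rho'}$ is a disjoint union of Demazure crystals; applying the sublemma with $k=\Phi_j$ then shows $\mathbb{B}_{\Phi}^{\rho}$ is again a disjoint union of Demazure crystals. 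One small point to check is associativity of the tensor product and that peeling off the rightmost factor is compatible with how the sublemma is phrased (it tensors $\mathbb{B}_k$ on the right), which is why I set up the ordering $\mathbb{B}_{\Phi_n}^{\otimes\rho_n}\otimes\cdots\otimes\mathbb{B}_{\Phi_1}^{\otimes\rho_1}$ with the factors read so that successive truncations remove a factor from the right end.

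The main obstacle is really the sublemma, i.e. showing that tensoring a disjoint union of Demazure crystals by a single $\mathbb{B}_k$ stays in the class of disjoint unions of Demazure crystals. Since the paper is allowed to quote Lemma~1 of \cite{krsv} directly, the ``obstacle'' here is not a new proof but rather correctly citing and, if desired, sketching why $\mathcal{B}_{\sigma}(\lambda)\otimes\mathbb{B}_k$ decomposes: one uses that $\mathbb{B}_k$ is the Demazure crystal $\mathcal{B}_{s_{k-1}\cdots s_1}((1))$ inside $\mathbb{W}_m$, and invokes the standard fact (Kashiwara; see also \cite{bump-sch}) that $\mathcal{B}_{\sigma}(\lambda)\otimes B(\epsilon_1)$ restricted through the $k$-truncation admits a Demazure filtration. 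Everything else is bookkeeping: the induction, the associativity of $\otimes$, and the observation that a disjoint union of disjoint unions of Demazure crystals is still a disjoint union of Demazure crystals. I would present the lemma's proof in two or three lines: invoke Lemma~1 of \cite{krsv}, induct on the number of factors, and conclude.
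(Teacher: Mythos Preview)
Your inductive argument is correct, but note that the paper does not actually prove this lemma: it simply records that the statement \emph{is} Lemma~1 of \cite{krsv} and remarks that it is an application of \cite[Theorem~1.2]{sami}. So there is no ``paper's own proof'' to compare against beyond a bare citation.

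One small misattribution in your write-up: you identify your sublemma (that $\mathcal{D}\otimes\mathbb{B}_k$ remains a disjoint union of Demazure crystals whenever $\mathcal{D}$ is) with Lemma~1 of \cite{krsv}. In the paper's account, Lemma~1 of \cite{krsv} is already the full statement about $\mathbb{B}_\Phi^\rho$, not the one-factor step. The one-factor step---that a Demazure crystal tensored with a single $\mathbb{B}_k$ decomposes into Demazure crystals---is exactly the content supplied by \cite[Theorem~1.2]{sami}. With that correction your induction is a faithful reconstruction of how \cite{krsv} proves its Lemma~1, so if you want to present a proof rather than a citation, cite \cite[Theorem~1.2]{sami} for the inductive step and drop the reference to \cite{krsv} as the source of the sublemma.
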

This lemma is an application of \cite[Theorem 1.2]{sami}.
\bremark
Here we take the definition of $\mathbb{B}_{\Phi} ^{\rho}$ to be slightly different from the definition given in \cite{krsv} because we are using the tensor product rule from ~\cite[\S 2.3]{bump-sch} instead of the rule given in  ~\cite[\S 3.1]{krsv}. It is done because we are using the definition of reading word from \cite{Grothendieck}. 
\eremark

\noindent \textbf{Alternative proof:}
We know that the height $h$ of an element in $ \Re(\lambda/\mu, m)$ is a weakly decreasing sequence of positive integers. Thus $h \text{ can be written as } h=h_1^{t_1}h_2^{t_2} \cdots h_s^{t_s}, $ where $n \geq h_1 > h_2 > \cdots > h_s \geq 1$ and each $h_i$ occurs $t_i$ times in $h$. Then 
$ \mathbb{B}_{\Phi_{h_1}} ^{\otimes t_1} \otimes \mathbb{B}_{\Phi_{h_2}} ^{\otimes t_2} 
 \otimes \cdots \otimes \mathbb{B}_{\Phi_{h_s}} ^{\otimes t_s}$ is a disjoint union of Demazure crystals by the above lemma.

Let $ \Re(\lambda/\mu, \Phi , h)$ denote the set of all elements in $ \Re(\lambda/\mu, \Phi)$ that possess height $h$. Then $ \Re(\lambda/\mu, \Phi, h)$ is a disjoint union of Demazure crystals because
of the following two facts:
\begin{itemize}
    \item $ \Re(\lambda/\mu, \Phi, h)=( \mathbb{B}_{\Phi_{h_1}} ^{\otimes t_1} \otimes \mathbb{B}_{\Phi_{h_2}} ^{\otimes t_2} 
 \otimes \cdots \otimes \mathbb{B}_{\Phi_{h_s}} ^{\otimes t_s})$ $ \cap$ $ \Re(\lambda/\mu, n, h)$.

 \item  $ \Re(\lambda/\mu, n, h) $ is a full subcrystal of  $\mathbb{W}_n^{\otimes k},$ where $ k=t_1 + t_2 +\cdots + t_s $.
\end{itemize}
  Thus $\Re(\lambda/\mu, \Phi) $ is a disjoint union of Demazure crystals because
   $$ \Re(\lambda/\mu, \Phi) = \displaystyle \bigsqcup_{h} \Re(\lambda/\mu, \Phi, h), $$ where $h$ varies over all possible finite sequences.
\bremark
It is easy to see that $ \Tab(\lambda/\mu, \Phi)= \Re(\lambda/\mu, \Phi, h), \text{ where } h=\textbf{b}(\delta), \delta =\lambda-\mu$. Therefore we can deduce Theorem 4 in \cite{krsv} from the above discussion.
\eremark
\section*{Acknowledgements} The author would like to express his gratitude to Sankaran Viswanath for providing insightful comments and suggestions. The author is also grateful to Travis Scrimshaw for providing useful comments on the earlier preprint of this paper. 
\printbibliography
\end{document}